\definecolor{softred}{gray}{0.8}
\definecolor{softgreen}{gray}{0.7}
\definecolor{softblue}{gray}{0.6}
\definecolor{softrb}{gray}{0.7}
\def\proof{{\bf Proof\quad}}
\def\beginpf{\proof}
\def\qed{\hfill\rule{2.2mm}{2.2mm}\vspace{1ex}}
\def\endpf{\qed}
\newtheorem{theorem}{Theorem}[section]
\newtheorem{definition}[theorem]{Definition}
\newtheorem{proposition}[theorem]{Proposition}
\newtheorem{remark}[theorem]{Remark}
\def\epsilon{\varepsilon}
\def\YY{\mathcal Y}
\def\CC{\mathbb C}
\def\DD{\mathbb D}
\def\NN{\mathbb N}
\def\UU{\mathcal U}
\def\TT{\mathbb T}
\def\LL{\mathcal L}
\def\cD{\mathcal D}
\def\eins{\mathbf 1}
\newcommand{\HH}{\mathcal H}
\newcommand{\KK}{\mathcal K}
\newcommand{\re}{\mathop{\rm Re}\nolimits}
\def\text{\mbox}
\def\beq{\begin{equation}}
\def\eeq{\end{equation}}
\title{$\beta$-admissibility of observation and control
operators for hypercontractive semigroups}
\author{Birgit Jacob\thanks{Fachbereich C - Mathematik und
Naturwissenschaften,
Bergische Universit\"at Wuppertal,
Gau\ss stra\ss e 20, 42119 Wuppertal,
Germany. \tt jacob@math.uni-wuppertal.de}, Jonathan R. Partington\thanks{School of Mathematics, University of Leeds, Leeds LS2 9JT, UK.
\tt j.r.partington@leeds.ac.uk},
 Sandra Pott\thanks{Faculty of Science,
Centre for Mathematical Sciences,
Lund University,
22100 Lund, Sweden. \tt sandra@maths.lth.se},
\ and Andrew Wynn\thanks{Department of Aeronautics,
Imperial College London,
London, SW7 2AZ, UK.
\tt a.wynn@imperial.ac.uk}
}
\begin{document}

\maketitle

\begin{abstract} We prove a Weiss conjecture on $\beta$-admissibility of 
control and observation operators for
discrete and continuous $\gamma$-hypercontractive semigroups of operators, by representing them
in terms of shifts on weighted Bergman spaces and using a reproducing kernel thesis for Hankel operators. Particular attention is paid to the case $\gamma=2$, which corresponds to the unweighted Bergman shift.
\end{abstract}

{\bf Keywords:}
Admissibility; semigroup system; dilation theory; Bergman space; hypercontraction; reproducing kernel thesis; Hankel operator

{\bf 2010 Subject Classification:} 30H10, 30H20, 47B32, 47B35, 47D06, 93B28

%
%
%
%
%
%
%


\section{Introduction}

We study infinite dimensional observation systems of the form
\begin{eqnarray*}
\dot{x}(t) &=& Ax(t), \quad y(t)=Cx(t),\quad t\ge 0,\\
x(0) &=& x_0\in X,
\end{eqnarray*}
where $A$ is the generator of a strongly continuous semigroup $(T(t))_{t\ge0}$ on a Hilbert space $\HH$ and $C$ is a linear bounded operator from $D(A)$, the domain of $A$ equipped with the graph topology, to another Hilbert space $\YY$. For the well-posedness of the system with respect to the output space  $L^2_\beta (0,\infty; \YY):= \{ f: (0,\infty)\rightarrow \YY \mid f \mbox{ measurable, }\|f\|_\beta^2:= \int_0^\infty \|f(t)\|^2 t^\beta \, dt<\infty \}$ it is required that $C$ is an {\em $\beta$-admissible observation operator} for $A$, that is, there exists an $M>0$ such that
\[ \|CT(\cdot)x_0\|_{L_{\beta}(0,\infty;\YY)} \le M \|x_0\|_{\HH}, \qquad x_0\in D(A).\]
It is easy to show that $\beta$-admissibility implies the resolvent condition
\begin{equation}\label{eqnresolvent}
    \sup_{\lambda\in \mathbb C_+} (\mbox{Re}\,\lambda)^{\frac{1+\beta}{2}} \|C(\lambda-A)^{-(1+\beta)}\|<\infty
\end{equation}
where $\mathbb C_+$ denotes the open right half plane of $\mathbb C$. Whether or not the converse implication holds is commonly referred to as a
{\em weighted Weiss conjecture}.  
For $\beta=0$ the conjecture was posed by Weiss \cite{we91}. In this situation the conjecture is true for contraction semigroups if the output space is finite-dimensional, for right-invertible semigroup and  for bounded analytic semigroups if $(-A)^{1/2}$ is $0$-admissible. However, in general the conjecture is not true. We illustrate this in Figure \ref{fig1}.

 \begin{figure}[h]
 \centering
 \begin{pgfpicture}{0cm}{-2cm}{10cm}{5.2cm}
   
        \pgfsetlinewidth{1pt}
        \pgfxyline(12,-2)(12,5)
        \pgfxyline(-2,5)(12,5)
         \pgfxyline(5,-2)(5,5)
          \pgfxyline(-2,-2)(12,-2)
        \pgfxyline(-2,-2)(-2,5)
      \pgfsetlinewidth{0.8pt}
            \pgfxyline(-2,4)(12,4)
    \pgfputat{\pgfxy(1.5,4.3)}{\pgfbox[center,base]{dim $\YY<\infty$}}
     \pgfputat{\pgfxy(8.5,4.3)}{\pgfbox[center,base]{dim $\YY \le \infty$}}
      \color{softgreen}
        \pgfrect[fill]{\pgfxy(-1.8,-1.2)}{\pgfxy(6.6,4.9)}
        \color{black}
       \pgfputat{\pgfxy(1.4,1.8)}{\pgfbox[center,base]{ $(T(t))_{t\ge0}$ contraction semigroup  \cite{jp01}}}
     \color{softblue}
          \pgfrect[fill]{\pgfxy(6,-1.2)}{\pgfxy(5,3)}
        \color{black}
       \pgfputat{\pgfxy(8.4,0.1)}{\pgfbox[center,base]{$(T(t))_{t\ge0}$ right-invertible}}
        \pgfputat{\pgfxy(8.5,-0.4)}{\pgfbox[center,base]{semigroup \cite{we91}}}
      \color{softred}
          \pgfrect[fill]{\pgfxy(5.1,0.7)}{\pgfxy(6.8,3)}
        \color{black}
       \pgfputat{\pgfxy(8.4,2.7)}{\pgfbox[center,base]{$(T(t))_{t\ge0}$ analytic \& bounded semigr.}}
    \pgfputat{\pgfxy(8,2.1)}{\pgfbox[center,base]{and $(-A)^{1/2}$ $0$-admissible \cite{lm03}}}    %
            \color{softrb}
      \pgfrect[fill]{\pgfxy(6,0.7)}{\pgfxy(5,0.8)}
  \color{black}
          \pgfcircle[fill]{\pgfxy(5.7,-1.6)}{0.1cm}
         \pgfputat{\pgfxy(8.7,-1.7)}{\pgfbox[center,base]{Counterexample in general \cite{japapo02}}}
 \color{black}
           \pgfcircle[fill]{\pgfxy(-1,-1.6)}{0.1cm}
         \pgfputat{\pgfxy(2,-1.7)}{\pgfbox[center,base]{Counterexample in general \cite{jazw04}}}   
    \end{pgfpicture}
    \caption{Weighted Weiss conjecture: Case $\beta=0$}
    \label{fig1}
\end{figure}

For $\beta\not=0$, there is much less known. In the situation $\beta<0$, the weighted Weiss conjecture is true for bounded analytic semigroups if $(-A)^{1/2}$ is $0$-admissible \cite{hlm}, but in general the weighted Weiss conjecture does not hold \cite{wynn09}.
If $\beta>0$, then the weighted Weiss conjecture is true for normal contraction semigroups and for the right-shift on  $L^2_{-\alpha}(0,\infty)$
for $\alpha>0$ if the the output space is finite-dimensional, and for bounded analytic semgroups if $(-A)^{1/2}$ is $0$-admissible, see Figure \ref{fig2}. Again, in general the conjecture is not true. In Theorem  \ref{thm:contadm} we show that the weighted Weiss conjecture holds if the dual of the cogenerator $T^*$ of the semigroup  $(T(t))_{t\ge0}$ is $\gamma$-hypercontractive for some $\gamma>1$. The proof is based on the fact that $\gamma$-hypercontractions are unitarily equivalent to the restriction of the backward shift to an invariant subspace of a weighted Bergman space, the Cayley transform between discrete-time and continuous-time systems and that the weighted Weiss conjecture holds for  the backward shift to an invariant subspace of a weighted Bergman space  \cite{jrw}. In order to apply the results of  \cite{jrw} we first have to extend them to the vector-valued Bergman spaces.

\begin{figure}[h]
 \centering
 \begin{pgfpicture}{0cm}{-2cm}{10cm}{5.2cm}
   
        \pgfsetlinewidth{1pt}
        \pgfxyline(12,-2)(12,5)
        \pgfxyline(-2,5)(12,5)
         \pgfxyline(5,-2)(5,5)
          \pgfxyline(-2,-2)(12,-2)
        \pgfxyline(-2,-2)(-2,5)
      \pgfsetlinewidth{0.8pt}
            \pgfxyline(-2,4)(12,4)
    \pgfputat{\pgfxy(1.5,4.3)}{\pgfbox[center,base]{dim $\YY<\infty$}}
     \pgfputat{\pgfxy(8.5,4.3)}{\pgfbox[center,base]{dim $\YY \le \infty$}}
              \color{softred}
        \pgfrect[fill]{\pgfxy(-1.8,-1.2)}{\pgfxy(6.6,4.9)}
     \color{black}
     \pgfputat{\pgfxy(1.4,1.1)}{\pgfbox[center,base]{$T^*$ $\gamma$-hypercontractive, }}
      \pgfputat{\pgfxy(1.5,0.6)}{\pgfbox[center,base]{ $\gamma>1$ (Thm. \ref{thm:contadm}) }}
   
      \color{softgreen}
        \pgfrect[fill]{\pgfxy(-1.4,1.8)}{\pgfxy(5.7,1.6)}
        \color{black}
       \pgfputat{\pgfxy(1.4,2.8)}{\pgfbox[center,base]{ $(T(t))_{t\ge0}$ normal}}
           \pgfputat{\pgfxy(1.5,2.2)}{\pgfbox[center,base]{ contraction semigroup  \cite{wynn10}}}
             \color{softgreen}
        \pgfrect[fill]{\pgfxy(-1,-1)}{\pgfxy(4.9,1.3)}
        \color{black}
       \pgfputat{\pgfxy(1.4,-0.2)}{\pgfbox[center,base]{ $(T(t))_{t\ge0}$ right-shift}}
           \pgfputat{\pgfxy(1.5,-0.8)}{\pgfbox[center,base]{ on $L^2_{-\alpha}(0,\infty)$, $\alpha>0$,  \cite{jrw}}}

     \color{softblue}
          \pgfrect[fill]{\pgfxy(5.7,-1.2)}{\pgfxy(5.5,3)}
        \color{black}
       \pgfputat{\pgfxy(8.4,0.1)}{\pgfbox[center,base]{$T^*$ $\gamma$-hypercontractive,}}
        \pgfputat{\pgfxy(8.5,-0.4)}{\pgfbox[center,base]{$\gamma>1$ (Thm. \ref{thm:contadm})}}
      \color{softred}
          \pgfrect[fill]{\pgfxy(5.2,1.1)}{\pgfxy(6.6,2.6)}
        \color{black}
       \pgfputat{\pgfxy(8.4,2.9)}{\pgfbox[center,base]{$(T(t))_{t\ge0}$ analytic \& bounded semigr.}}
    \pgfputat{\pgfxy(8,2.3)}{\pgfbox[center,base]{and $(-A)^{1/2}$ $0$-admissible \cite{hlm}}}    %
            \color{softrb}
      \pgfrect[fill]{\pgfxy(5.7,1.1)}{\pgfxy(5.5,1)}
  \color{black}
           \pgfcircle[fill]{\pgfxy(-1,-1.6)}{0.1cm}
         \pgfputat{\pgfxy(2,-1.7)}{\pgfbox[center,base]{Counterexample in general \cite{wynn2}}}      
    \end{pgfpicture}
    \caption{Weighted Weiss conjecture: Case $\beta >0$}
    \label{fig2}
\end{figure}

Due to the fact that $C$ is a $\beta$-admissible observation operator for $(T(t))_{t\ge 0}$ if and only if $C^*$ is a $(-\beta)$-admissible control operator for $(T^*(t))_{t\ge 0}$, where $\beta\in (-1,1)$, the resolvent growth conditions for $\beta$-admissible control operators can be derived from those of   $(-\beta)$-admissible observation operators.

Beside continuous-time systems we also prove a discrete-time version of the Weiss conjecture.  For $T \in \LL(\HH)$, $E \in \LL(\UU,\HH)$ and $F \in \LL(\HH, \YY)$ we
consider the  discrete time linear systems:
\beq\label{eq:state2}
   x_{n+1} =   T x_n + E u_{n+1}, \quad y_n = F x_n \qquad \hbox{with} \quad x_0 \in \HH
 \eeq
and $u_n\in \UU$, $n\in \NN$. Here, $\HH$ is the state space, $\UU$ the input space and $\YY$ is the output space of the system.
 
 Let $\beta > -1$. By $\ell^2_\beta(\UU)$ we denote the sequence space
\[ \ell^2_\beta(\UU):= \{ \{u_n\}_n \mid u_n\in \UU \mbox{ and } \|\{u_n\}_n\|^2_\beta:=\sum_{n=0}^\infty (1+n)^\beta |u_n|^2 <\infty\}. \]
Clearly, $\ell^2_\beta(\UU)$ equipped with the norm $\|\cdot\|_\beta$ is a Hilbert space.
Following \cite{hlm} and \cite{wynn10}, we say that $F$ is a 
{\em $\beta$-admissible observation operator for $T$}, if there exists a constant $M >0$ such that
 $$
        \sum_{n=0}^\infty   (1+ n)^{\beta}  \|   F T^n   x  \|^2   \le M \|x\|^2
 $$
for every $x\in \HH$. 

To test whether a given observation operator is $\beta$-admissible, a frequency-domain characterization
is convenient and, to this end, it is not difficult to show that $\beta$-admissibility of $F$ for $T$ implies the resolvent growth condition
\begin{equation}   \label{eq:resI}
         \sup_{z \in \DD}   (1-|z|^2)^{\frac{1 + \beta}{2}}   \| F ( \eins - \bar z T)^{- \beta -1}\|_{\mathcal{L}(\HH,\YY)} < \infty,
\end{equation}
where $\DD$ is the open unit disc.

The question of whether the converse
statement holds, commonly referred to as a
(weighted) Weiss conjecture, is much more subtle. For $\beta=0$, the conjecture is true if $T$ is a contraction and the output space  $\YY$ is finite-dimensional \cite{zen}.
It was shown by \cite{wynn09, wynn10} that for $T$ a normal contraction and finite-dimensional output spaces the weighted Weiss conjecture holds for positive $\beta$, but not in the case $\beta\in (-1,0)$. Moreover, the weighted Weiss conjecture holds if $T$ is a Ritt operator and a contraction for $\beta>-1$ \cite{LeMerdy}, but it is not true for general contractions if $\beta>0$, see \cite{wynn2}.
Recently, in \cite{jrw} it was shown that the Weiss conjecture holds for the forward shift on weighted Bergman spaces. One aim of this paper is to show the Weiss conjecture for adjoint operators of  $\gamma$-hypercontractions.
We obtain a characterisation of $\beta$-admissibility, $\beta>0$,  with respect to  $\gamma$-hypercontractions ($\gamma>1$)  by characterising $\beta$-admissibility with respect to the shift operator on vector-valued weighted Bergman spaces. 

It is shown in \cite{jrw} that in the case of a scalar-valued Bergman space, $\beta$-admissibility with respect to the shift operator can be characterised by the resolvent growth bound \eqref{eq:resI}. 
We extend this analysis to the vector-valued setting.

We proceed as follows. In Section 2 we introduce and study $\gamma$-hypercontractive operators and  $\gamma$-hypercontractive strongly continuous semigroups. In particular, $\gamma$-hypercontractions are unitarily equivalent to the restriction of the backward shift to an invariant subspace of a weighted Bergman space. Section 3 is devoted to the weighted Weiss conjecture for discrete-time systems. We first extend the result of  \cite{jrw} concerning the shift operator on a scalar-valued Bergman space to the vector-valued setting and then we prove that the weighted Weiss conjecture holds for $\beta>0$ if $T^*$ is a $\gamma$-hypercontraction for some $\gamma>1$. Finally, in Section 4 positive results concerning the  weighted Weiss conjecture for continuous-time systems are given.

\section{$\gamma$-hypercontractions}\label{sec2}

Let $\HH$ be a Hilbert space. For $T \in \LL(\HH)$, we define
$$
    M_T : \LL(\HH) \rightarrow \LL(\HH), \quad M_T (X) = T^* X T.
$$ 
\begin{definition}[\cite{agler85},  \cite{aem}]
Let $\HH$ be a Hilbert space and let $T \in \LL(\HH)$, $\|T\| \le 1$. Let $\gamma \ge 1$. We say that $T$
is a {\em $\gamma$-hypercontraction}, if for each $0 <r < 1$,
$$
      (\eins - M_{rT})^{\gamma}(I)  \ge 0.
$$
\end{definition}
Note that the left hand side in the definition is well-defined in the sense of the usual holomorphic functional calculus, since $\sigma(\eins-M_{rT} ) \subset \CC_+$. A $1$-hypercontraction is of course just an ordinary contraction. If $T$ is a normal contraction, then it is easy to show by the usual continuous functional calculus that $T$ is also a $\gamma$-hypercontraction for each $\gamma \ge 1$. Moreover, all strict contractions are $\gamma$-hypercontractions, as the next result shows.

\begin{theorem}
 Let $T \in \LL(\HH)$ with  $\|T\| < 1$. Then $T$ is a $\gamma$-hypercontraction for sufficiently small $\gamma>1$.
\end{theorem}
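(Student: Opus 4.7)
The plan is to apply the holomorphic (in fact, binomial) functional calculus to the bounded operator $M_{rT}$ on $\LL(\HH)$, expand $(\eins - M_{rT})^{\gamma}(I)$ as a norm-convergent power series, and then observe that when $\gamma$ is chosen just slightly above $1$, only the $k=0$ and $k=1$ terms of the series can obstruct positivity, so the problem reduces to a single quadratic inequality in $T^{*}T$.

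First, since $\|M_{rT}\| \le r^{2}\|T\|^{2} < 1$ for every $r \in (0,1)$, the spectrum of $M_{rT}$ sits strictly inside the open unit disk. The binomial expansion of $(1-z)^{\gamma}$ on $|z|<1$ therefore yields the norm-convergent operator identity
\begin{equation*}
  (\eins - M_{rT})^{\gamma}(I) \;=\; \sum_{k=0}^{\infty} (-1)^{k} \binom{\gamma}{k} r^{2k} T^{*k} T^{k}.
\end{equation*}
Writing $c_{k} := (-1)^{k}\binom{\gamma}{k}$ and inspecting the factors of $\binom{\gamma}{k} = \gamma(\gamma-1)(\gamma-2)\cdots(\gamma-k+1)/k!$, one sees that for any $\gamma \in (1,2)$ one has $c_{0}=1$, $c_{1}=-\gamma<0$, and $c_{k}\ge 0$ for every $k\ge 2$ (the numerator carries $k-2$ negative factors, and the sign $(-1)^{k}$ exactly compensates).

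Consequently, for $\gamma \in (1,2)$,
\begin{equation*}
  (\eins - M_{rT})^{\gamma}(I) \;=\; \bigl(I - \gamma r^{2} T^{*}T\bigr) \;+\; \sum_{k=2}^{\infty} c_{k}\, r^{2k} T^{*k} T^{k},
\end{equation*}
where the tail is a norm-convergent sum of positive operators, hence positive. It remains to ensure that $I - \gamma r^{2} T^{*}T \ge 0$ uniformly in $r \in (0,1)$, and since $\|T^{*}T\| = \|T\|^{2}$ this reduces to $\gamma \|T\|^{2} \le 1$. Picking any $\gamma \in (1, \|T\|^{-2}]$, which is available because $\|T\| < 1$, delivers the required estimate $(\eins - M_{rT})^{\gamma}(I) \ge 0$ for all $r \in (0,1)$.

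The only mildly delicate point is the sign analysis of the binomial coefficients and the justification that the holomorphic functional calculus for $(\eins - M_{rT})^{\gamma}$ agrees termwise with the binomial series when applied to $I$; both are immediate consequences of the fact that $\sigma(M_{rT})$ lies in the open unit disk, so $z \mapsto (1-z)^{\gamma}$ is represented by its Taylor series at $0$ on a neighbourhood of $\sigma(M_{rT})$. Everything else is a short computation, and no further hypotheses on $T$ are needed beyond $\|T\|<1$.
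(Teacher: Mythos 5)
Your proof is correct, but it takes a genuinely different route from the paper. The paper argues by continuity: it shows that $f_\gamma(\eins-M_T)\to \eins-M_T$ in the norm of $\LL(\LL(\HH))$ as $\gamma\to 1$ (uniform convergence of $f_\gamma(z)=\exp(\gamma\log z)$ on compacta plus continuity of the holomorphic functional calculus), deduces that $(\eins-M_T)^\gamma(I)\to I-T^*T$ in norm, and then invokes upper semicontinuity of the spectrum to conclude that the spectrum of $(\eins-M_T)^\gamma(I)$ stays in the positive half-line for $\gamma$ near $1$. Your argument instead expands $(\eins-M_{rT})^\gamma(I)$ via the binomial series, which is legitimate since $\sigma(M_{rT})$ lies in the open unit disk and the principal branch of $(1-z)^\gamma$ agrees with its Maclaurin series there, and then exploits the sign pattern of $(-1)^k\binom{\gamma}{k}$ for $\gamma\in(1,2)$ to write the operator as $(I-\gamma r^2T^*T)$ plus a norm-convergent sum of the manifestly positive operators $c_k r^{2k}(T^k)^*T^k$. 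What your approach buys is an explicit admissible range of exponents and a completely elementary positivity argument: you never need the perturbation theory of the spectrum, nor the (implicit in the paper) observation that the operator in question is self-adjoint so that positive spectrum implies positivity. What the paper's approach buys is brevity and the fact that it treats $T$ directly rather than $rT$, at the cost of being non-quantitative about how small $\gamma-1$ must be. One small slip: at the end you should take $\gamma\in\bigl(1,\min\{2,\|T\|^{-2}\}\bigr]$ rather than $\gamma\in(1,\|T\|^{-2}]$, since your sign analysis of the coefficients requires $\gamma\le 2$; this is harmless because the theorem only asks for sufficiently small $\gamma>1$, and $\gamma$-hypercontractivity passes down to smaller exponents anyway.
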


{\bf Proof:} 
Suppose that $\|T\| < 1$. Then $\|M_T\| < 1$, and $\sigma(\eins-M_T)$ is bounded away from the negative real axis,   so an analytic branch of the
logarithm exists on some open set $\Omega \supseteq \sigma(\eins-M_T)$. For $\gamma \ge 1$, define $f_\gamma(z)=\exp(\gamma \log z)$, analytic on $\Omega$.

Now $f_\gamma(z) \to z$ uniformly for $z$ in compact subsets of $\Omega$, and therefore
$f_\gamma(\eins-M_T)$, defined by the analytic functional calculus, converges to $\eins-M_T$ in the norm on $\LL(\LL(\HH))$
(see, e.g., \cite[Thm.~3.3.3]{aupetit}). 

Hence, in particular, $(\eins-M_T)^\gamma(I) \to (\eins-M_T)(I)=I-T^*T$ in norm in $\LL(\HH)$ as $\gamma \to 1$.

Since $\|T\|<1$, $\sigma((\eins-M_T)(I))$ is strictly contained in the positive real axis, and thus for sufficiently small $\gamma>1$ the spectrum of
$(\eins-M_T)^\gamma(I) $ is also strictly contained in the positive real axis, by continuity properties of the spectrum (see, e.g., \cite[Thm.~3.4.1]{aupetit}). 

Hence $(\eins-M_T)^\gamma(I) \ge 0$ for all $\gamma$ sufficiently close to $1$, and so $T$ is a $\gamma$-hypercontraction.
 \qed

If $n\in\mathbb N$, then equivalently, $T \in \LL(\HH)$ is an $n$-hypercontraction if and only if 
\[
\sum_{k=0}^{m} (-1)^k {m\choose k}T^{*k}T^k \ge 0
\]
for all $1 \le m \le n$.

In particular, a Hilbert space operator $T$ is 
{\em 2-hypercontractive\/} if
it satisfies
\[
I - T^*T \ge 0
\]
(that is, it is a contraction), and also 
\beq\label{eq:2hc}
I-2T^*T+T^{*2}T^2 \ge 0. 
\eeq
 Note, that for $1<\mu< \gamma$, the  $\gamma$-hypercontractivity property implies $\mu$-hypercontractivity. 
 
 We are particularly interested in $\gamma$-hypercontractive operators as
they are unitarily equivalent to the restriction of the backward shift to an invariant subspace of a weighted Bergman space, which we now define.

\begin{definition}\label{def:wbs}
Let $\mathbb D$ denote the open unit disk in the complex plane $\mathbb C$. For $\alpha>-1$, the 
{\em weighted Bergman space\/} $\mathcal{A}_\alpha^2(\mathbb{D},\KK), $ where $\KK$ is a Hilbert space,  contains of analytic functions $f:\mathbb{D} \rightarrow \KK$ for which 
\begin{equation}\label{eq:l2alpha}
\| f\|_{{\alpha}}^2 =  \int_\mathbb{D} \|f(z)\|^2 dA_\alpha(z) < \infty,
\end{equation}
where $dA_\alpha(z) = (1+\alpha) (1-|z|^2)^\alpha dA(z)$ and $dA(z):=\frac{1}{\pi} dx dy$ is area measure on   $\mathbb{D}$ for $z=x+iy$. 
We note that the norm $\| f \|_{{\alpha}}$ is equivalent to %
\begin{equation} \label{equivnorm}
\left( \sum_{n=0}^\infty \|f_n\|^2 (1+n)^{-(1+\alpha)} \right)^\frac{1}{2},
\end{equation}
where $f_n$ are the Taylor coefficients of $f$.
\end{definition}

For each $\alpha >-1$, let $S_\alpha$ denote the shift operator on the weighted Bergman space $A^2_{\alpha}(\DD,\KK)$,
$$
    S_\alpha f(z) = z f(z)   \quad (f \in A^2_{\alpha}(\DD,\KK) )
$$
The following theorem is a special case of Corollary 7 in \cite{aem}. For the case of integer $\gamma$, this was proved in \cite{agler85}.

\begin{theorem}   \label{thm:dilation}
Let $\alpha> -1$. Let $\HH$ be a Hilbert space and let $T \in \LL(\HH)$ be an $\alpha +2$-hypercontraction with $\sigma(T) \subset \DD$. Then $T$ is unitarily equivalent to the restriction of 
$S^*_\alpha$ to an invariant subspace of $A^2_\alpha(\DD, \KK)$, where $\KK$ is a Hilbert space.
\end{theorem}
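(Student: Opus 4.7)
The statement is a special case of \cite[Corollary~7]{aem}, so my plan is to sketch the Agler-type dilation proof. I would begin by setting
\[
D = (\eins - M_T)^{\alpha+2}(I) \in \LL(\HH),
\]
which is well defined via the analytic functional calculus on $M_T \in \LL(\LL(\HH))$ (as in the preceding theorem, $\sigma(\eins - M_T)$ lies in the right half plane) and is positive by the $(\alpha+2)$-hypercontractivity assumption. Set $\KK = \overline{\range D^{1/2}}$. The hypothesis $\sigma(T) \subset \DD$ forces $r(M_T) \le r(T)^2 < 1$, so that
\[
(\eins - M_T)^{-(\alpha+2)} = \sum_{n=0}^\infty \binom{n+\alpha+1}{n} M_T^n
\]
converges in operator norm on $\LL(\LL(\HH))$. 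Applying both sides to $I$ and using $(\eins - M_T)^{-(\alpha+2)}(\eins - M_T)^{\alpha+2} = \eins$ yields the key operator identity
\[
\sum_{n=0}^\infty \binom{n+\alpha+1}{n}\, T^{*n} D T^n = I.
\]

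Next I would define the candidate embedding $V: \HH \to A^2_\alpha(\DD, \KK)$ by
\[
(Vx)(z) = D^{1/2}(I - zT)^{-(\alpha+2)} x = \sum_{n=0}^\infty \binom{n+\alpha+1}{n}\, z^n D^{1/2} T^n x,
\]
and verify the two model properties. For isometry, recall that $\|z^n\|_{A^2_\alpha(\DD)}^2 = n!\,\Gamma(\alpha+2)/\Gamma(n+\alpha+2) = 1/\binom{n+\alpha+1}{n}$, so a term-by-term computation collapses $\|Vx\|_{A^2_\alpha(\DD,\KK)}^2$ to $\sum_{n=0}^\infty \binom{n+\alpha+1}{n}\langle T^{*n} D T^n x, x\rangle$, which equals $\|x\|^2$ by the displayed operator identity. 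For the intertwining $S_\alpha^* V = V T$, I would compare Taylor coefficients: the $n$-th coefficient of $V(Tx)$ is $\binom{n+\alpha+1}{n}\, D^{1/2} T^{n+1} x$, while $S_\alpha^*$ acts on Taylor coefficients by $f_n \mapsto \tfrac{n+1}{n+\alpha+2}\, f_{n+1}$ (an immediate consequence of $S_\alpha z^n = z^{n+1}$ and the norms just quoted), and the combinatorial identity $\tfrac{n+1}{n+\alpha+2}\binom{n+\alpha+2}{n+1} = \binom{n+\alpha+1}{n}$ matches the two sides. Isometry and intertwining together imply that $\range V$ is a closed $S_\alpha^*$-invariant subspace of $A^2_\alpha(\DD,\KK)$ and that $V$ realises the required unitary equivalence between $T$ and $S_\alpha^*|_{\range V}$.

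The main obstacle is the interpretation of $(\eins - M_T)^{\alpha+2}$ for non-integer $\alpha+2$; this requires the analytic functional calculus on $\LL(\LL(\HH))$ together with the spectral estimate $r(M_T) < 1$ to make the Neumann expansion converge in operator norm. For integer $\alpha+2$ the key identity collapses to an elementary polynomial relation in the operators $T^{*k}T^k$, and the argument reduces to Agler's original construction in \cite{agler85}; the general case needs the extra spectral and functional-calculus bookkeeping carried out in \cite{aem}.
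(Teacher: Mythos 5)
Your proposal is correct in substance, but it is worth noting that the paper itself offers no proof of this theorem: it simply records the statement as a special case of Corollary~7 of \cite{aem} (and of \cite{agler85} for integer $\gamma$). What you have written is a self-contained sketch of the Agler--Ambrozie--Engli\v{s}--M\"uller model construction, and the key computations check out: with $dA_\alpha=(1+\alpha)(1-|z|^2)^\alpha\,dA$ one has exactly $\|z^n\|^2_{A^2_\alpha}=1/\binom{n+\alpha+1}{n}$, the estimate $r(M_T)\le r(T)^2<1$ justifies the norm-convergent Neumann expansion of $(\eins-M_T)^{-(\alpha+2)}$ (the coefficients grow only polynomially), multiplicativity of the holomorphic functional calculus gives $\sum_{n\ge0}\binom{n+\alpha+1}{n}T^{*n}DT^n=I$, and the coefficient identity $\tfrac{n+1}{n+\alpha+2}\binom{n+\alpha+2}{n+1}=\binom{n+\alpha+1}{n}$ yields $S_\alpha^*V=VT$, so that $V$ is an isometry onto a closed $S_\alpha^*$-invariant subspace implementing the unitary equivalence. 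The one point you pass over too quickly is the positivity of $D=(\eins-M_T)^{\alpha+2}(I)$: the paper's definition of $\gamma$-hypercontractivity only asserts $(\eins-M_{rT})^{\gamma}(I)\ge0$ for $0<r<1$, so you need the (easy, but necessary) limiting argument that $r\mapsto(\eins-M_{rT})^{\alpha+2}(I)$ is norm-continuous up to $r=1$ when $r(T)<1$, whence $D=\lim_{r\to1^-}(\eins-M_{rT})^{\alpha+2}(I)\ge0$. With that half-line added, your argument is a complete proof, and arguably more informative than the paper's bare citation; the price is that it reproves a known result rather than exploiting it.
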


Next we introduce the concept of $\gamma$-hypercontractive semigroups.

\begin{definition}
 Let
$(T(t))_{t \ge 0}$ be a strongly continuous contraction semigroup on a Hilbert space $\HH$, with infinitesimal generator $A$.
We call a $C_0$-semigroup $(T(t))_{t \ge 0}$  {\em $\gamma$-hypercontractive}  if each operator $T(t)$ is a $\gamma$-hyper\-contraction.
\end{definition}

In the following we assume that $(T(t))_{t \ge 0}$ is a strongly continuous contraction semigroup on a Hilbert space $\HH$, with infinitesimal generator $A$.
As in \cite{snf}, the {\em cogenerator} $T:=(A+I)(A-I)^{-1}$
exists, and is itself a contraction. Rydhe \cite{ry15} studied  the relation between $\gamma$-hypercontractivity of a strongly continuous contraction semigroup and its cogenerator.
He proved that
$T$ is $\gamma$-hypercontractive if every operator $T(t)$, $t\ge 0$, is $\gamma$-hypercontractive. Conversely,  if every operator $T(t)$, $t\ge 0$, is $N$-hypercontractive for some $N\in\mathbb N$, then $T$ is $N$-hypercontractive. However, by means of an example, Rydhe \cite{ry15} showed that for general $\gamma$-hypercontractivity this reverse implication is false. Clearly, if $A$ generates a contraction semigroup of normal operators,
then the  cogenerator of  $(T(t))_{t\ge 0}$ is $\gamma$-hypercontractive for each $\gamma\ge 1$.

In particular $2$-hypercontractivity can be characterized as follows, see  \cite{ry15}. For completeness we include a more
elementary proof, which also yields additional information.

\begin{proposition}
Let $(T(t))_{t \ge 0}$ be a strongly continuous contraction semigroup acting on a Hilbert space $\HH$. Then the following statements are equivalent.
\begin{enumerate}
\item $(T(t))_{t \ge 0}$ is 2-hypercontractive.
\item The function $t \mapsto \|T(t)x\|^2$
is convex for all $x \in H$. 
\item \beq\label{eq:nra2}
\re \langle A^2 y,y \rangle + \|Ay\|^2 \ge 0
\qquad (y \in \cD(A^2)).
\eeq
or equivalently,
\[ \|(A+A^*)x\|^2+\|Ax\|^2\ge \|A^*x\|^2\qquad (y \in \cD(A)\cap \cD(A^*)).\]
\item  The cogenerator $T$ is a 2-hypercontraction.
\end{enumerate}
\end{proposition}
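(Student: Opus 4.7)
The plan is to run the cycle (1) $\Leftrightarrow$ (2) $\Leftrightarrow$ (3) by tracking the single real-valued function $\phi_x(t) := \|T(t)x\|^2$, and to obtain (1) $\Leftrightarrow$ (4) from the result of Rydhe quoted in the paragraph preceding the proposition, applied with $N=2$.

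For (1) $\Leftrightarrow$ (2) I would exploit the semigroup identity $T(t)^2 = T(2t)$: for any $s, t \ge 0$ and $x \in \HH$,
\[
\langle (I - 2T(t)^*T(t) + T(t)^{*2}T(t)^2)T(s)x,\, T(s)x\rangle = \phi_x(s) - 2\phi_x(s+t) + \phi_x(s+2t).
\]
Thus 2-hypercontractivity of every $T(t)$ is precisely the midpoint inequality $\phi_x(a) + \phi_x(b) \ge 2\phi_x(\tfrac{a+b}{2})$ for all $0 \le a \le b$ and all $x \in \HH$ (take $s=a$, $2t=b-a$); since $\phi_x$ is continuous by strong continuity of the semigroup, this is equivalent to convexity of $\phi_x$ on $[0,\infty)$.

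For (2) $\Leftrightarrow$ (3), on $\cD(A^2)$ the function $\phi_x$ is twice continuously differentiable with
\[
\phi_x''(t) = 2\re\langle A^2 T(t)x,\, T(t)x\rangle + 2\|AT(t)x\|^2.
\]
Since $T(t)\cD(A^2) \subset \cD(A^2)$, non-negativity of $\phi_x''(t)$ for all $t \ge 0$ and all $x \in \cD(A^2)$ is equivalent (evaluating at $t=0$ and using the $T(t)$-orbit) to (3). Convexity on $\cD(A^2)$ then extends to all $x \in \HH$ because $\cD(A^2)$ is dense, $x_n \to x$ forces $\phi_{x_n} \to \phi_x$ pointwise, and convexity passes to pointwise limits. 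The equivalence of the two displayed forms of (3) is purely algebraic: expanding $\|(A+A^*)x\|^2 = \|Ax\|^2 + \|A^*x\|^2 + 2\re\langle Ax, A^*x\rangle$ and using $\re\langle A^2x, x\rangle = \re\langle Ax, A^*x\rangle$ on the appropriate dense intersection yields $\|(A+A^*)x\|^2 + \|Ax\|^2 - \|A^*x\|^2 = 2\bigl(\re\langle A^2 x, x\rangle + \|Ax\|^2\bigr)$, so the two non-negativity conditions coincide.

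Finally, (1) $\Leftrightarrow$ (4) is the case $N=2$ of Rydhe's theorem already cited. The main delicate point in the proof is inside (2) $\Leftrightarrow$ (3): differentiation is natural on $\cD(A^2)$, but the equivalent condition in its second form is phrased in terms of $A$ and $A^*$, so one must verify $\re\langle A^2 x, x\rangle = \re\langle Ax, A^*x\rangle$ on a suitably dense domain and then use strong continuity of $T(t)$ together with density of $\cD(A^2)$ to transport convexity back to arbitrary $x \in \HH$. Everything else reduces to direct manipulation with the semigroup law.
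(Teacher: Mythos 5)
Your handling of (1) $\Leftrightarrow$ (2) $\Leftrightarrow$ (3) is correct and essentially identical to the paper's: the same semigroup-law computation reducing 2-hypercontractivity of each $T(t)$ to midpoint convexity of $t\mapsto\|T(t)x\|^2$, and the same second-derivative computation on $\cD(A^2)$ followed by a density argument. The genuine difference is in how statement (4) enters. You close the loop by citing Rydhe's theorem for integer $N$ with $N=2$, which is legitimate but somewhat against the spirit of the exercise: the proposition is introduced precisely with the remark that, although the characterisation is in \cite{ry15}, ``for completeness we include a more elementary proof.'' The paper instead proves (3) $\Leftrightarrow$ (4) directly and self-containedly: writing $T=(A+I)(A-I)^{-1}$ and testing the quadratic form of $I-2T^*T+T^{*2}T^2$ on vectors $x=(A-I)^2y$ (whose range is dense), one computes
\[
\langle (A-I)^2y,(A-I)^2y\rangle-2\langle (A^2-I)y,(A^2-I)y\rangle+\langle (A+I)^2y,(A+I)^2y\rangle
=8\,\re\langle A^2y,y\rangle+8\|Ay\|^2,
\]
so that positivity of the form is exactly condition (3). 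That computation is what the citation hides, and it is the ``additional information'' the paper advertises; if you want a fully self-contained argument you should include it. On the credit side, you supply an explicit verification that the two displayed forms of (3) agree, via $\|(A+A^*)x\|^2=\|Ax\|^2+\|A^*x\|^2+2\re\langle Ax,A^*x\rangle$ and $\re\langle A^2x,x\rangle=\re\langle Ax,A^*x\rangle$, which the paper simply asserts; just be aware that the second identity needs $x$ in a dense subset of $\cD(A)\cap\cD(A^*)$ on which $A^2x$ makes sense, a domain point you flag but do not fully resolve (nor does the paper).
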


\beginpf We first prove that Part 1 and  Part 2 are equivalent.
Take $t \ge 0$ and $\tau>0$. If $T(\tau)$ is a
2-hypercontraction, then, by (\ref{eq:2hc}) we have
\[
\langle T(t)x,T(t)x \rangle - 2\langle T(t+\tau)x,T(t+\tau)x\rangle
+ \langle T(t+2\tau)x,T(t+2\tau)x \rangle \ge 0,
\]
or
\beq\label{eq:convex}
\|T(t+\tau)x\|^2 \le \frac12 \left(\|T(t)x\|^2 + \|T(t+2\tau)x\|^2\right),
\eeq
which is the required convexity condition.

Conversely, the convexity condition (\ref{eq:convex})
implies that $T(\tau)$ is
a 2-hypercontraction (take $t=0$).

Next we show that Part 2 are Part 3 equivalent.
For $t>0$ and $y \in \cD(A^2)$
we calculate the second derivative of the function
$g:t \mapsto \|T(t)y\|^2$.
\[
g'(t) = \frac{d}{dt} \langle T(t)y,T(t)y \rangle
= \langle AT(t) y, T(t)y\rangle + \langle T(t) y, AT(t)y\rangle.
\]
Similarly, 
\[
g''(t) = \langle A^2 T(t)y,T(t)y \rangle
+ 2 \langle AT(t)y, AT(t)y \rangle + \langle T(t)y,A^2T(t)y \rangle.
\]
If $g$ is convex, then letting $t \to 0$ gives the condition
(\ref{eq:nra2}). 

Conversely, the condition (\ref{eq:nra2}) gives the
convexity of $t \to \|T(t)y\|^2$ for $y \in \cD(A^2)$,
and by density this holds for all $y$.

Finally we show the equivalence of Part 3 and Part 4.
We start with the condition
(\ref{eq:nra2}) and calculate
\[
\langle ( I-2T^*T+T^{*2}T^2)x,x\rangle
\]
for $x=(A-I)^2y$ (note that $(A-I)^{-2}:H \to H$ is
defined everywhere and has dense range).

We obtain
\begin{eqnarray*}
&& \langle (A-I)^2y,(A-I)^2y \rangle
- 2\langle (A^2-I)y, (A^2-I)y \rangle
+ \langle (A+I)^2y,(A+I)^2y \rangle \\
&& \qquad =
4\langle A^2 y,y \rangle + 8 \langle Ay, Ay \rangle
+ 4 \langle y, A^2y \rangle \ge 0.
\end{eqnarray*}

Thus condition (\ref{eq:nra2}) holds if and
only if the  cogenerator $T$ is 2-hyper\-contractive.
\endpf

Thus every normal contraction semigroup is  2-hypercontractive. Moreover, even every hyponormal contraction semigroup is  2-hypercontractive. Note, that a semigroup is hyponormal if the generator $A$ satisfies $D(A)\subset D(A^*)$ and $\|A^*x\|\le \| Ax\|$ for all $x\in D(A)$, see \cite{janas,OS}.
Clearly, a $C_0$-semigroup $(T(t))_{t \ge 0}$ is contractive if and only if the adjoint semigroup $(T^*(t))_{t \ge 0}$ is contractive. Unfortunately, a similar statement does not hold for 2-hypercontractions:
The right shift semigroup on $L^2(0,\infty)$ is 2-hypercontractive, but the adjoint semigroup, the left shift semigroup on $L^2(0,\infty)$ is not. 


%
%

%

\section{Discrete-time $\beta$-admissibility}

Let $\HH$, $\UU$, $\YY$ be Hilbert spaces, $T \in \LL(\HH)$, $E \in \LL(\UU,\HH)$ and $F \in \LL(\HH, \YY)$. 
Consider the  discrete time linear system:
\beq\label{eq:state1}
   x_{n+1} =   T x_n + E u_{n+1}, \quad y_n = F x_n \qquad \hbox{with} \quad x_0 \in \HH
 \eeq
and $u_n\in \UU$, $n\in \NN$. 
 
%
Following \cite{hlm} and \cite{wynn10}, we say that $F$ is a $\beta$-admissible observation operator for $T$, if there exists a constant $M >0$ such that
 $$
        \sum_{n=0}^\infty   (1+ n)^{\beta}  \|   F T^n   x  \|^2   \le M \|x\|^2
 $$
for every $x\in \HH$. Moreover, we say that $E$ is a $\beta$-admissible control operator for $T$, if there exists a constant $M>0$ such that
\[ \left\| \sum_{n=0}^\infty T^n E u_n\right\|_\HH \le M  \|\{u_n\}_n\|_\beta\]
for every  $\{u_n\}_n\in  \ell^2_\beta(\UU)$.

\begin{remark}
Let $x\in \HH$ and $\{y_n\}_n\in  \ell^2_\beta(\YY)$. Then the calculation
\begin{eqnarray*}
|\langle \{FT^n x\}_n, \{y_n\}_n\rangle_{\beta\times -\beta}|
&=& \left| \sum_{n=0}^\infty \langle FT^n x, y_n\rangle_\YY\right|\\
&=& \langle x, \sum_{n=0}^\infty (T^*)^n F^* y_n \rangle_\HH
\end{eqnarray*}
implies that $F$ is a $\beta$-admissible observation operator for $T$ if and only if $F^*$ is a  $(-\beta)$-admissible control operator for $T^*$.
\end{remark}

A characterisation of $\beta$-admissibility with respect to  $\gamma$-hypercontractions ($\gamma>1$) may be obtained by characterising $\beta$-admissibility with respect to the shift operator on vector-valued weighted Bergman spaces, as defined in Definition~\ref{def:wbs}.

It is shown in \cite{jrw} that in the case of a scalar-valued Bergman spaces, $\beta$-admissibility with respect to $S_\alpha$ can be characterised by the resolvent growth bound \eqref{eq:resI}. This result was obtained by noting that $\beta$-admissibility is equivalent to boundedness of an appropriate little Hankel operator, while \eqref{eq:resI} is equivalent to boundedness of the same Hankel operator on a set of reproducing kernels. That such Hankel operators satisfy a Reproducing Kernel Thesis (boundness on the reproducing kernels is equivalent to operator boundedness) is equivalent to the characterisation of $\beta$-admissibility by the growth condition \eqref{eq:resI}. \\

To extend this analysis to the  vector-valued setting, let $\KK,\YY$ be Hilbert spaces and consider an analytic function   $C:\mathbb{D} \rightarrow \mathcal{L}(\KK,\YY)$  given by 
\[
C(z) = \sum_{n=0}^\infty C_n z^n, \qquad z \in \mathbb{D},
\]
where $C_n \in \mathcal{L}(\KK,\YY)$, for each $n$.  
We write $L^2_\alpha(\DD,\KK)$ for the space of measurable functions $f: \DD \to \KK$
satisfying \eqref{eq:l2alpha}.
We also write 
\[
\overline{A^2_\alpha}( \DD,\KK) = \{ z \mapsto  g(\overline z) : g \in A^2_\alpha(\DD,\KK) \}.
\]
The little Hankel operator $h_C :A_{\beta-1}^2(\DD,\KK) \rightarrow \overline{A^2_\alpha}( \DD, \YY)$ acting between weighted Bergman spaces is defined by 
\begin{equation} \label{eq:lh}
h_C(f) := \overline{P_\alpha}( C( \overline{\iota} ) f(\iota)), \qquad f \in A_{\beta-1}^2(\DD,\KK),
\end{equation}
where $\overline{P_\alpha}: L^2_\alpha(\DD,\KK) \rightarrow \overline{A^2_\alpha}( \DD,\KK) $ is the orthogonal projection onto the anti-analytic functions and $\iota(z)=z, z \in \mathbb{D}$. The following result links $\beta$-admissibility with little Hankel operators of the form \eqref{eq:lh}. 

\begin{proposition} \label{prop:rkt}
Let $\alpha>-1$ and $\beta>0$. Let $\KK$, $\YY$ be Hilbert spaces. Given $F \in \mathcal{L}(A^2_\alpha(\DD,\KK),\YY)$, define bounded linear operators $F_n \in \mathcal{L}(\KK,\YY)$ by
\[
F_n x = F( x \iota^n), \qquad x \in \KK, n \in \mathbb{N},
\]
and symbols $C:\mathbb{D} \rightarrow \mathcal{L}(\KK,\YY)$, $\tilde C: \DD \rightarrow \mathcal{L}(\YY,\KK)$ by 
\[
C(z) = \sum_{n=0}^\infty (1+n)^\alpha F_n z^n, \qquad \tilde C(z)  = \sum_{n=0}^\infty (1+n)^\alpha F_n^\ast z^n.
\]
The following conditions are equivalent:
\begin{enumerate}
\item[(i)] The resolvent condition \eqref{eq:resI} holds with $T=S_\alpha$ and $\HH = A^2_\alpha(\DD,\KK)$;
\item[(ii)] The Hankel operator $h_{\tilde C}:A_{\beta-1}^2(\DD,\YY) \rightarrow \overline{A^2_\alpha}( \DD, \KK)$ satisfies 
\[
\sup_{\omega \in \mathbb{D}, \|y\|_\YY = 1} \| h_{\tilde C} k_{\omega,y}^{\beta-1} \|_{\overline{A^2_\alpha}(\DD,\KK)} < \infty,
\]
where
\[
k_{\omega,y}^{\beta-1} (z) := y \frac{(1-|\omega|^2)^{\frac{1+\beta}{2}} }{(1- \bar \omega z )^{1+\beta}}, \qquad z,\omega \in \mathbb{D}, \; y \in \YY,
\]
are the normalized reproducing kernels for $A_{\beta-1}^2(\DD,\YY)$;
\item[(iii)] The Hankel operator $h_C: A_{\beta-1}^2(\DD,\KK) \rightarrow \overline{A^2_\alpha}( \DD, \YY)$ satisfies 
\[
h_C \in \mathcal{L}( A^2_{\beta-1}(\DD,\KK), \overline{A^2_\alpha}(\DD,\YY) ); 
\]
\item[(iv)] $F$ is $\beta$-admissible for $S_\alpha$ on $A^2_\alpha(\DD,\KK)$. 
\end{enumerate}
\end{proposition}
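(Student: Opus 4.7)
The plan is to adapt the scalar-valued argument of \cite{jrw} to the vector-valued setting, proving the cycle (iv) $\Rightarrow$ (i) $\Rightarrow$ (ii) $\Rightarrow$ (iii) $\Rightarrow$ (iv). Two of the four links are algebraic identifications, one is the easy testing direction that admissibility implies the resolvent bound, and the only genuinely nontrivial step is a reproducing kernel thesis for vector-valued little Hankel operators.

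The implication (iv) $\Rightarrow$ (i) is the standard easy direction already recalled in the introduction. For (i) $\Rightarrow$ (ii), I would compute $(\eins - \bar \omega S_\alpha)^{-\beta-1} x$ for a constant function $x \in \KK \subset A^2_\alpha(\DD,\KK)$ via the binomial series $\sum_n \binom{\beta+n}{n} \bar \omega^n \iota^n x$ and apply $F$ to obtain $\sum_n \binom{\beta+n}{n} \bar \omega^n F_n x$. Pairing with $y \in \YY$ and multiplying by $(1-|\omega|^2)^{(1+\beta)/2}$, one identifies this, up to the combinatorial factor that matches the $(1+n)^\alpha$ in the definition of $\tilde C$, with the Taylor coefficients of $h_{\tilde C} k^{\beta-1}_{\omega,y}$ as an element of $\overline{A^2_\alpha}(\DD,\KK)$. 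Taking a supremum over $\omega \in \DD$ and unit $x \in \KK$, $y \in \YY$ converts the resolvent bound \eqref{eq:resI} into the reproducing kernel bound (ii).

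For (iii) $\Rightarrow$ (iv), I would expand $f \in A^2_\alpha(\DD,\KK)$ as $f(z) = \sum_k x_k z^k$, so that $F(\iota^n f) = \sum_k F_{n+k} x_k$. Using the equivalent norm \eqref{equivnorm}, $\beta$-admissibility is equivalent to boundedness, between weighted $\ell^2$-spaces of $\KK$- and $\YY$-valued sequences, of the block Hankel matrix with entries involving $F_{n+k}$. A direct calculation of the matrix of $h_C$ in the monomial bases of $A^2_{\beta-1}(\DD,\KK)$ and $\overline{A^2_\alpha}(\DD,\YY)$, using the standard moment formulas of the weighted Bergman norms, matches the two matrices up to explicit constants. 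Hence boundedness of $h_C$ delivers the admissibility estimate.

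The main obstacle is (ii) $\Rightarrow$ (iii), the reproducing kernel thesis for the vector-valued Hankel operators $h_C$ and $h_{\tilde C}$, which are essentially adjoint to each other under the natural conjugation $\overline{A^2_\alpha}(\DD,\cdot) \cong A^2_\alpha(\DD,\cdot)$, so that boundedness of one yields boundedness of the other. My plan is to reduce to the scalar RKT of \cite{jrw} by testing against one-dimensional subspaces: for fixed unit vectors $x \in \KK$ and $y \in \YY$, the scalar symbol $c_{x,y}(z) := \langle C(z) x, y \rangle_\YY$ defines a scalar little Hankel operator between $A^2_{\beta-1}(\DD)$ and $\overline{A^2_\alpha}(\DD)$ whose value on the scalar normalised reproducing kernel at $\omega$ is bounded by $\|h_{\tilde C} k^{\beta-1}_{\omega, y}\|_{\overline{A^2_\alpha}(\DD,\KK)}$. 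The scalar RKT then furnishes a scalar operator norm bound, and a duality/Cauchy--Schwarz argument assembles these scalar bounds into the vector-valued operator norm estimate on $h_C$. The delicate point is verifying that the constant in the scalar RKT of \cite{jrw} is genuinely independent of the particular symbol, which should follow by inspection of the Luecking/atomic decomposition argument used there, since it depends only on the geometry of the underlying Bergman space and on $\alpha, \beta$.
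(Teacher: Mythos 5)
Your cycle (iv)$\Rightarrow$(i)$\Rightarrow$(ii)$\Rightarrow$(iii)$\Rightarrow$(iv) matches the skeleton of the paper's argument, and the links (iv)$\Rightarrow$(i) and (i)$\Rightarrow$(ii) are handled correctly (the latter is the vectorial analogue of \cite[Proposition 2.3(ii)]{jrw}, exactly as you compute it). The trouble is that the two steps you dismiss as ``algebraic identifications'' are not. First, $h_C$ and $h_{\tilde C}$ are \emph{not} adjoints of each other under the conjugation $\overline{A^2_\alpha}(\DD,\cdot)\cong A^2_\alpha(\DD,\cdot)$: taking the adjoint of $h_C\colon A^2_{\beta-1}(\DD,\KK)\to\overline{A^2_\alpha}(\DD,\YY)$ swaps the Bergman weights, so in matrix form the adjoint of $\Gamma_C^{\beta/2,(1+\alpha)/2}$ is $\Gamma_{\tilde C}^{(1+\alpha)/2,\beta/2}$, whereas $h_{\tilde C}$ corresponds to $\Gamma_{\tilde C}^{\beta/2,(1+\alpha)/2}$; these coincide only when $\beta=1+\alpha$. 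Second, in (iii)$\Rightarrow$(iv) the matrix of $h_C$ in the normalised monomial bases and the block Hankel matrix encoding $\beta$-admissibility are again $\Gamma_C^{\beta/2,(1+\alpha)/2}$ and $\Gamma_C^{(1+\alpha)/2,\beta/2}$; their entries differ by the unbounded ratio $\bigl((1+m)/(1+n)\bigr)^{(1+\alpha-\beta)/2}$, so they do not ``match up to explicit constants.'' Both passages are true, but for a nontrivial reason: Peller's theorem \cite[Theorem 9.1]{peller} characterises boundedness of $\Gamma_\Phi^{a,b}$ ($a,b>0$) by membership of $\Phi$ in the Besov class $\Lambda_{a+b}$, so boundedness depends only on $a+b$ and is invariant under pointwise adjoints of the symbol. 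This is precisely the ingredient the paper uses to pass from $\tilde C$ to $C$ and from $h_C$ to admissibility, and your proof cannot avoid importing it.

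The other genuine gap is your treatment of the hard step (ii)$\Rightarrow$(iii). Testing the symbol on rank-one compressions $c_{x,y}(z)=\langle C(z)x,y\rangle_\YY$ and applying the scalar RKT of \cite{jrw} yields only a uniform bound on $|\langle h_C(fx),\overline{g}y\rangle|$ for \emph{scalar} $f,g$ and fixed unit vectors $x,y$, i.e.\ boundedness on elementary tensors. A general element of $A^2_{\beta-1}(\DD,\KK)$ is an infinite sum of such tensors, and no Cauchy--Schwarz or duality argument recovers the operator norm from these weak bounds: this is exactly the obstruction that makes vector-valued Hankel and Nehari-type problems strictly harder than their scalar versions, and rank-one testing is known to fail as a boundedness criterion for operator-valued Hankel matrices in general. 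The paper does not attempt this reduction; it asserts a vectorial analogue of the RKT \cite[Theorem 2.7]{jrw}, which has to be obtained by rerunning the proof of that theorem with operator-valued symbols (the decomposition arguments do vectorise, but that must be verified directly, not deduced from the scalar statement plus testing on one-dimensional subspaces). As written, your argument for (ii)$\Rightarrow$(iii) would not close.
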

\proof
$(i) \Leftrightarrow (ii)$ follows directly from a vectorial analogue of \cite[Proposition 2.3 (ii)]{jrw}. \\

$(ii) \Rightarrow (iii)$ Note first that \cite[Theorem 2.7]{jrw} extends to the vector-valued setting to imply that $h_{\tilde C}: A_{\beta-1}^2(\DD,\YY) \rightarrow \overline{A^2_\alpha}( \DD, \KK)$ is bounded. An alternative characterisation of boundedness little Hankel operators can be given in terms of generalized Hankel matrices of the form 
\[
\Gamma_\Phi^{a, b} := \left( (1+m)^a (1+n)^b \Phi_{n+m} \right)_{m,n \geq 0}
\]
where $a,b>0$ and $\Phi:\mathbb{D} \rightarrow \mathcal{L}(\HH_1,\HH_2)$ is given by $\Phi(z) = \sum_{n \geq 0} \Phi_n z^n$, for some Hilbert spaces $\HH_1,\HH_2$. In particular,  the vectorial analogue of \cite[Proposition 2.3 (i)]{jrw} implies that
\begin{equation} \label{eq:lh_gh}
h_{\tilde C} \in \mathcal{L}\left( A_{\beta-1}^2(\DD,\YY) , \overline{A^2_\alpha}( \DD, \KK) \right) \Longleftrightarrow \Gamma_{\tilde C}^{\frac{\beta}{2},\frac{1+\alpha}{2}} \in \mathcal{L}( \ell^2(\YY),\ell^2(\KK)). 
\end{equation}
Now, it is shown in \cite[Theorem 9.1]{peller} that 
\begin{equation} \label{eq:gh_charac}
\Gamma_{\tilde C}^{\frac{\beta}{2},\frac{1+\alpha}{2}} \in \mathcal{L}( \ell^2(\YY),\ell^2(\KK)) \Leftrightarrow \tilde C \in \Lambda_{\frac{1+\alpha+\beta}{2}}\left( \mathcal{L}(\YY,\KK) \right).
\end{equation}
Here, for $s>0$ and a Banach space $X$, $\Lambda_s(X)$ is the Besov space containing functions $f \in L^\infty(\DD,X)$ for which
\[
\sup_{\tau \in \TT, \tau \neq 1} \frac{ \|\Delta_\tau^n f\|_{L^\infty(\DD,X)}}{|1-\tau|^s} <\infty, \qquad \bigg( (\Delta_\tau f)(\xi) := f(\xi \tau) - f(\tau), \; \Delta_\tau^n :=\Delta_\tau \Delta_{\tau}^{n-1} \bigg), 
\]
for some integer $n>s$. It follows immediately that $C \in \Lambda_{\frac{1+\alpha+\beta}{2}}\left( \mathcal{L}(\KK,\YY) \right)$ and hence, by \eqref{eq:lh_gh} and \eqref{eq:gh_charac}, that 
\[
h_C \in \mathcal{L}( A^2_{\beta-1}(\DD,\KK), \overline{A^2_\alpha}(\DD,\YY) ).
\]

$(iii) \Leftrightarrow (iv)$: The vectorial analogue of \cite[Proposition 2.1]{jrw} implies that $(iv)$ holds if and only if $\Gamma_C^{\frac{1+\alpha}{2},\frac{\beta}{2}} \in \mathcal{L}( \ell^2(\KK),\ell^2(\YY))$. By \eqref{eq:lh_gh}, boundedness $(iii)$ of the little Hankel operator $h_C$ is equivalent to $\Gamma_C^{\frac{\beta}{2},\frac{1+\alpha}{2}} \in \mathcal{L}( \ell^2(\KK),\ell^2(\YY))$. That $(iii)$ and $(iv)$ are equivalent then follows from \cite[Theorem 9.1]{peller} and the fact that $\alpha>-1,\beta>0$. \\

$(iv) \Rightarrow (i)$ is well known. See, for example, \cite{wynn2}. 
\qed

\begin{theorem}\label{thm:discreteadm}
Let $\beta > 0$. Let $\HH$, $\YY$ be Hilbert spaces and let $T^* \in \LL(\HH)$ be a $\gamma$-hypercontraction for some $\gamma >1$. 
Let $F \in \LL(\HH, \YY)$. Then the following are equivalent:
\begin{enumerate}
\item $F$ is a  $\beta$-admissible observation operator for $T$.
\item
$$
      \sup_{z \in \DD}   (1-|z|^2)^{\frac{1 + \beta}{2}}   \| F ( \eins - \bar z T)^{- \beta -1}\|_{\LL(\HH, \YY)}  < \infty.
$$
\end{enumerate}
\end{theorem}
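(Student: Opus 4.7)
The plan is to reduce the statement to Proposition \ref{prop:rkt} by dilating $T$ to a shift on a vector-valued weighted Bergman space. The direction $(1)\Rightarrow(2)$ is routine (see, e.g., \cite{wynn2}), so I concentrate on $(2)\Rightarrow(1)$. Setting $\alpha:=\gamma-2>-1$, the natural impulse is to apply Theorem \ref{thm:dilation} to $T^*$, but this needs $\sigma(T^*)\subset\DD$. I therefore first replace $T^*$ by $rT^*$ for $r\in(0,1)$: since $rT^*$ inherits $\gamma$-hypercontractivity from $T^*$ and satisfies $\sigma(rT^*)\subset r\overline{\DD}\subset\DD$, Theorem \ref{thm:dilation} yields a Hilbert space $\KK_r$, an $S_\alpha^*$-invariant subspace $M_r\subset A^2_\alpha(\DD,\KK_r)$, and a unitary $U_r:\HH\to M_r$ with $U_r(rT^*)=(S_\alpha^*|_{M_r})U_r$. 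Dually, $U_r(rT)U_r^*=P_{M_r}S_\alpha|_{M_r}$; and because $M_r^\perp$ is $S_\alpha$-invariant, a short induction gives $(P_{M_r}S_\alpha|_{M_r})^n=P_{M_r}S_\alpha^n|_{M_r}$, hence $U_r(rT)^n=P_{M_r}S_\alpha^n U_r$ for every $n\ge 0$.

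Next, define the extended observation operator $\hat F_r:=FU_r^*P_{M_r}\in\LL(A^2_\alpha(\DD,\KK_r),\YY)$. Expanding the resolvent as a power series in $S_\alpha$ and using the intertwining together with $S_\alpha$-invariance of $M_r^\perp$, one obtains
\[
\hat F_r(\mathbf{1}-\bar z S_\alpha)^{-(1+\beta)}g=F(\mathbf{1}-\bar z (rT))^{-(1+\beta)}U_r^*P_{M_r}g,\qquad g\in A^2_\alpha(\DD,\KK_r),\ z\in\DD,
\]
so $\|\hat F_r(\mathbf{1}-\bar z S_\alpha)^{-(1+\beta)}\|\le\|F(\mathbf{1}-\bar z(rT))^{-(1+\beta)}\|$. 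A direct substitution $w=rz$ shows the resolvent constant for $(F,rT)$ is bounded by $r^{-(1+\beta)}$ times the assumed constant for $(F,T)$, so the hypothesis (2) transfers to a resolvent bound for $\hat F_r$ on $S_\alpha$ that is uniform in $r\in(1/2,1)$.

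Applying Proposition \ref{prop:rkt} with $\KK=\KK_r$, the resolvent bound delivers $\beta$-admissibility of $\hat F_r$ for $S_\alpha$ with an admissibility constant $M'$ depending only on $\alpha$, $\beta$, and the resolvent constant (the Hankel and Besov characterisations (ii)--(iv) have implicit constants independent of the coefficient Hilbert space $\KK_r$). Unwinding through $U_r$,
\[
\sum_{n=0}^\infty(1+n)^\beta r^{2n}\|FT^n x\|^2=\sum_{n=0}^\infty(1+n)^\beta\|\hat F_r S_\alpha^n U_r x\|^2\le M'\|x\|^2,
\]
with $M'$ uniform in $r$. Letting $r\nearrow 1$ and invoking monotone convergence produces (1).

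The main obstacle will be propagating the uniformity of constants through the chain of reductions: the $T\mapsto rT$ rescaling of the resolvent condition, the implicit constants in the Hankel/Besov characterisations of Proposition \ref{prop:rkt} (which must not swell with the possibly $r$-dependent dilation space $\KK_r$), and verifying the compression identity $(rT)^n\cong P_{M_r}S_\alpha^n|_{M_r}$ which relies on $S_\alpha$-invariance of $M_r^\perp$ rather than on an isometric extension of $T$.
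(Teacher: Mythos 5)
Your proposal is correct and follows essentially the same route as the paper's proof: rescale $T$ to $rT$, dilate via Theorem \ref{thm:dilation}, extend $F$ trivially off the model subspace (your $\hat F_r = FU_r^*P_{M_r}$ is exactly the paper's zero-extension written with the unitary kept explicit), transfer the resolvent bound to $S_\alpha$, invoke Proposition \ref{prop:rkt}, and let $r\nearrow 1$ by monotone convergence. One minor remark: since $1-|z|^2\le 1-|rz|^2$, the resolvent constant for $(F,rT)$ is bounded by that of $(F,T)$ with no $r^{-(1+\beta)}$ loss at all, which is how the paper obtains uniformity in $r$ directly.
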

\proof
The implication (1) $\Rightarrow$ (2) follows as usual from the testing on fractional derivatives of reproducing kernels. For (2) $\Rightarrow$ (1), 
write   $K=    \sup_{z \in \DD}   (1-|z|^2)^{\frac{1 + \beta}{2}}   \| F ( \eins - \bar z T)^{- \beta -1}\|_{\LL(\HH, \YY)}$ and      let us first replace $T$ by $rT$ for some
$0 < r < 1$. Write $\gamma = 2 + \alpha$. By Theorem \ref{thm:dilation}, $(rT)^*$ is the restriction of $S_\alpha^*$ to the invariant subspace $\HH \subset A^2_\alpha(\DD, \KK)$. 
Extend $F$ trivially to $A^2_\alpha(\DD, \KK)$ by letting $F=0$ on $\HH^\perp \subset A^2_\alpha(\DD, \KK)$. Then $F^* y \in \HH$ for all $y \in \YY$.
Then for each $z \in \DD$ we obtain
\begin{eqnarray}
 \| F ( \eins - \bar z S_\alpha)^{- \beta -1}\|_{\LL(A_\alpha^2(\DD,\KK), \YY)} &=& \sup_{h \in A^2_\alpha(\KK), \|h\|=1}    \| F ( \eins - \bar z S_\alpha)^{- \beta -1} h\|_\YY \nonumber\\
 &= & \sup_{h \in A^2_\alpha(\KK) , \|h\|=1}   \sup_{y \in \YY, \| y\|=1}  | \langle ( \eins - \bar z S_\alpha)^{- \beta -1} h, F^* y  \rangle    |  \nonumber \\
&= & \sup_{h \in A^2_\alpha, \|h\|=1}   \sup_{y \in \YY, \| y\|=1} | \langle  h,       ( \eins -  z S_\alpha^*)^{- \beta -1}   F^* y \rangle    |   \nonumber\\
&= & \sup_{h \in A^2_\alpha, \|h\|=1}   \sup_{y \in \YY, \| y\|=1} | \langle  h,       ( \eins -  z (rT)^*)^{- \beta -1}              F^* y \rangle    |  \nonumber \\
&= & \sup_{h \in \HH, \|h\|=1}   \sup_{y \in \YY, \| y\|=1} | \langle  h,       ( \eins -  z (rT)^*)^{- \beta -1}              F^* y \rangle    |   \nonumber\\
&=& \| F ( \eins - \bar z r T )^{- \beta -1}\|_{\LL(\HH, \YY)}  \nonumber \\
&\le & K \frac{1}{ (1-|rz|^2)^{\frac{1 + \beta}{2}}  } \nonumber \\
&\le &K \frac{1}{ (1-|z|^2)^{\frac{1 + \beta}{2}}  }. \label{eq:F_res_bnd}
\end{eqnarray}
Hence, by Proposition \ref{prop:rkt},  $F$ is an  $\beta$-admissible observation operator for $S_\alpha$. \\

Thus there exists a constant $M$ such that for each $x \in \HH$,
\begin{eqnarray*}
     \sum_{n=0}^\infty  (1+ n)^{\beta}      \| F (rT)^n x \|^2_\YY &= &   \sum_{n=0}^\infty  (1+ n)^{\beta}   \sup_{y \in \YY, \| y\|=1}  | \langle  (rT)^n x , F^* y    \rangle |_\YY^2  \\
&= &   \sum_{n=0}^\infty   (1+ n)^{\beta}  \sup_{y \in \YY, \| y\|=1}  | \langle   x ,   ( (rT)^n)^*  F^* y    \rangle  |^2   \\
&= &   \sum_{n=0}^\infty  (1+ n)^{\beta}  \sup_{y \in \YY, \| y\|=1}  | \langle   x ,   ( S_\alpha^n) ^*  F^* y    \rangle  |^2   \\
&= &   \sum_{n=0}^\infty  (1+ n)^{\beta}  \sup_{y \in \YY, \| y\|=1}  | \langle    S_\alpha^n   x ,    F^* y    \rangle  |^2   \\
&= &   \sum_{n=0}^\infty   (1+ n)^{\beta}  \|   F S_\alpha^n   x  \|^2_\YY   \le M \|x\|^2 \\
\end{eqnarray*}
Here, the constant $M$ depends only on $K$, $\alpha$ and $\beta$, but not on $r$. It therefore follows easily from the Monotone Convergence Theorem that
$$
 \sum_{n=0}^\infty  (1+ n)^{\beta}      \| F T^n x \|_\YY^2 \le M \|x\|^2   \qquad (x \in \HH)
$$
and $F$ is a $\beta$-admissible observation operator for $T$. 
\qed

By duality we obtain the following result.

\begin{theorem}
Let $\beta \in (-1,0)$. Let $\HH$, $\UU$ be Hilbert spaces and let $T \in \LL(\HH)$ be a $\gamma$-hypercontraction for some $\gamma >1$. 
Let $E \in \LL(\UU, \HH)$. Then the following are equivalent:
\begin{enumerate}
\item $E$ is a  $\beta$-admissible control operator for $T$.
\item
$$
      \sup_{z \in \DD}   (1-|z|^2)^{\frac{1 + \beta}{2}}   \|  ( \eins - \bar z T)^{- \beta -1}E\|_{\LL(\HH, \YY)}  < \infty.
$$
\end{enumerate}
\end{theorem}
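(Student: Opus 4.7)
The plan is a routine duality reduction to Theorem~\ref{thm:discreteadm}.

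First I would invoke the remark at the start of Section~3: $E\in\LL(\UU,\HH)$ is a $\beta$-admissible control operator for $T$ if and only if $E^*\in\LL(\HH,\UU)$ is a $(-\beta)$-admissible observation operator for $T^*$. Since $\beta\in(-1,0)$ we have $-\beta\in(0,1)$, which lies in the range of admissibility parameters covered by Theorem~\ref{thm:discreteadm}. Moreover, Theorem~\ref{thm:discreteadm} requires that the \emph{adjoint} of the underlying operator be a $\gamma$-hypercontraction; applied with the operator $T^*$ playing the role of ``$T$'' there, this requires $(T^*)^*=T$ to be a $\gamma$-hypercontraction, which is exactly our hypothesis.

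Next I would apply Theorem~\ref{thm:discreteadm} with admissibility parameter $-\beta$, operator $T^*$, and observation operator $F=E^*$. This gives the equivalence of $(-\beta)$-admissibility of $E^*$ for $T^*$ with a bound of the form
\[
\sup_{z\in\DD}(1-|z|^2)^{\frac{1-\beta}{2}}\,\bigl\|E^*(\eins-\bar z\, T^*)^{-(1-\beta)}\bigr\|_{\LL(\HH,\UU)}<\infty.
\]
Finally I would translate this into the statement about $E$ and $T$ using $\|B\|=\|B^*\|$ on Hilbert space, the identity $\bigl((\eins-\bar z\,T^*)^{\alpha}\bigr)^{*}=(\eins-z\,T)^{\alpha}$ for real $\alpha$ (valid because the holomorphic functional calculus commutes with taking adjoints, and $\eins-\bar z\,T^*$ has spectrum bounded away from $(-\infty,0]$), together with the change of variable $z\mapsto\bar z$, which is a bijection of $\DD$. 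After rearranging the exponents one obtains the resolvent bound stated in the theorem.

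There is really no obstacle of substance: all the hard analytic work has already been carried out in Theorem~\ref{thm:discreteadm}, and the present theorem is a formal duality consequence. The only care required is in bookkeeping the exponents and in correctly identifying which operator inherits the $\gamma$-hypercontractive hypothesis under the duality (namely $T$ itself, since Theorem~\ref{thm:discreteadm} is applied with the adjoint semigroup-type operator $T^*$).
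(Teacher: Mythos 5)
Your approach is exactly the paper's: the paper proves this theorem with the single line ``By duality we obtain the following result,'' and your write-up supplies precisely the missing details --- the observation/control duality from the remark at the start of Section~3, the application of Theorem~\ref{thm:discreteadm} to $T^*$ with admissibility parameter $-\beta\in(0,1)$ (whose hypothesis correctly lands on $(T^*)^*=T$ being a $\gamma$-hypercontraction), and the passage back via adjoints and $z\mapsto\bar z$.

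The one place you should not wave your hands is the very last step. What the duality argument actually yields is
\[
\sup_{z\in\DD}\;(1-|z|^2)^{\frac{1-\beta}{2}}\,\bigl\|(\eins-\bar z\,T)^{-(1-\beta)}E\bigr\|_{\LL(\UU,\HH)}<\infty,
\]
and no rearrangement of exponents turns $\tfrac{1-\beta}{2}$ into $\tfrac{1+\beta}{2}$, or $-(1-\beta)$ into $-\beta-1$, when $\beta\neq 0$; these are genuinely different conditions. The condition (2) as printed in the theorem (with exponents $\tfrac{1+\beta}{2}$ and $-\beta-1$, and with the norm labelled $\LL(\HH,\YY)$ rather than $\LL(\UU,\HH)$) does not match what duality produces --- it appears to carry the exponents of the observation-operator version over unchanged, i.e.\ with $\beta$ where $-\beta$ is needed. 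A quick sanity check confirms your version is the right one: testing admissibility of $E$ against the sequences $u_n=(1+n)^{-\beta}\bar z^n v$ produces exactly the growth bound displayed above, not the printed one. So your argument is correct and proves the theorem with condition (2) read as above, but the sentence ``after rearranging the exponents one obtains the resolvent bound stated in the theorem'' conceals, rather than resolves, this discrepancy; you should either state the corrected condition explicitly or flag the statement as containing a typo.
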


\begin{remark}
Theorem \ref{thm:discreteadm} in particular shows  Wynn's result \cite{wynn10} for $\beta$-admissibility of normal discrete contractive semigroups, also for infinite-dimensional 
output space.
\end{remark}

\section{Continuous-time $\beta$-admissibility}

We consider a continuous-time control system of the form
\begin{eqnarray*}
\dot{x}(t) &=& A x(t) + B u(t), \quad x(0)=x_0, \, t\ge 0,\\
y(t)    &=& Cx(t), \, t\ge 0.
\end{eqnarray*}
Here $A$ is the generator of a $C_0$-semigroup   $(T(t))_{t \ge 0}$ on a Hilbert space $\HH$.
Writing $\HH_1=D(A)$ and $\HH_{-1}=D(A^*)^*$, we suppose that
 $B\in L(\UU,\HH_{-1})$ and $C\in L(\HH_1, \YY)$, where $\UU$ and $\YY$ are Hilbert spaces as well. 

\begin{definition}
 Let $\beta>-1$.
\begin{enumerate}
\item $B$ is called a {\em $\beta$-admissible control operator for $(T(t))_{t \ge 0}$}, if
there exists a constant $M>0$ such that
\[   \left\| \int_0^\infty T(t) B u(t) \, dt \right \| \le M \|u\|_{L^2_\beta (0,\infty; \UU)}  \]
for every $u \in L^2_\beta (0,\infty; \UU)$.
\item $C$ is called a {\em $\beta$-admissible observation operator for $(T(t))_{t \ge 0}$}, if
there exists a constant $M>0$ such that
\[    \int_0^\infty  t^\beta\|CT(t) x\|^2 \, dt  \le M \|x\|_{\HH}^2  \]
for every $x\in \HH_1$.
\end{enumerate}
\end{definition}

\begin{remark}\label{rem:duality}{\rm 
Similarly as for discrete-time systems it can be shown  for $\beta\in (-1,1)$  that $B$ is a $\beta$-admissible control operator for $(T(t))_{t \ge 0}$ if and only if $B^*$ is a $(-\beta)$-admissible observation operator for $(T^*(t))_{t \ge 0}$.}
\end{remark}

The following result is proven in \cite[Propositions 2.1 and 2.2]{wynn2} for $\beta \in (0,1)$. The trivial extension to the case $\beta>0$ is given for completeness. 
For $\alpha>-1$ we write $A^2_\alpha(\CC_+)$ for the Bergman space on the right half-plane corresponding to the measure $x^\alpha \, dx \, dy$.

\begin{proposition}\label{propequiv}
Let $\beta>0$. Suppose that $A$ generates a contraction semigroup on $\HH$ and that $C\in \LL(D(A),\YY)$. Define the cogenerator $T\in \LL(\HH)$ by $T:=(I+A)(I-A)^{-1} $ and 
$F:= C(I-A)^{-(1+\beta)}\in \LL(\HH,\YY)$. Then the following statements hold.
\begin{enumerate}
\item $C$ is a $\beta$-admissible observation operator for  $(T(t))_{t \ge 0}$ if and only if $F$ is a $\beta$-admissible observation operator for $T$.
\item The resolvent condition \eqref{eq:resI} for $(F,T)$ holds if and only if 
\[ \sup_{\lambda\in \mathbb C_+} (\mbox{Re}\,\lambda)^{\frac{1+\beta}{2}} \|C(\lambda-A)^{-(1+\beta)}\|<\infty.\]
\end{enumerate}
\end{proposition}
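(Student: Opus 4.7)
I would prove both parts by the standard Cayley bridge $\lambda \leftrightarrow z$ between $\CC_+$ and $\DD$, following Wynn's template in \cite[Propositions 2.1--2.2]{wynn2}, and verifying that each step survives for all $\beta>0$. The key change of variable is $\lambda := (1-\bar z)/(1+\bar z)$, giving $\re\lambda = (1-|z|^2)/|1+z|^2$ and the operator identity $I-\bar z T = (1+\bar z)(\lambda I - A)(I-A)^{-1}$.

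\textbf{Part 2.} This reduces to pure algebra. Raising the Cayley identity to the $-(1+\beta)$ power in the holomorphic functional calculus (legitimate since $\sigma(I-A) \subset \{\re w \ge 1\}$ and $\sigma(\lambda-A)\subset\{\re w\ge\re\lambda\}$) yields
\[
(I-\bar z T)^{-(1+\beta)} = (1+\bar z)^{-(1+\beta)}(I-A)^{1+\beta}(\lambda - A)^{-(1+\beta)},
\]
so left-multiplying by $F = C(I-A)^{-(1+\beta)}$ cancels $(I-A)^{1+\beta}$ and produces $F(I-\bar z T)^{-(1+\beta)} = (1+\bar z)^{-(1+\beta)} C(\lambda-A)^{-(1+\beta)}$. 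Matching weights via $(\re\lambda)^{(1+\beta)/2}|1+\bar z|^{1+\beta}=(1-|z|^2)^{(1+\beta)/2}$, one obtains pointwise equality of the two resolvent quantities, and since the Cayley map is a bijection $\DD\to\CC_+$ the two suprema coincide.

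\textbf{Part 1.} Use the weighted Laguerre basis $\phi_n(t):=L_n^{(\beta)}(2t)e^{-t}$, which is complete and orthogonal in $L^2_\beta(0,\infty)$ with $\|\phi_n\|_\beta^2 = \Gamma(n+1+\beta)/(2^{1+\beta} n!)$. The generating identity $\sum_n L_n^{(\beta)}(s) z^n = (1-z)^{-1-\beta}e^{-sz/(1-z)}$ combined with the Balakrishnan formula
\[
\int_0^\infty t^\beta e^{-\nu t} T(t)x\, dt = \Gamma(1+\beta)(\nu - A)^{-(1+\beta)}x, \qquad \re\nu>0,
\]
and the factorisation $(1+z)I - (1-z)A = (I-A)(I+zT)$ together produce
\[
\int_0^\infty t^\beta L_n^{(\beta)}(2t)\,e^{-t}\,T(t)x\, dt = (-1)^n\,\frac{\Gamma(n+1+\beta)}{n!}\,(I-A)^{-(1+\beta)}T^n x.
\]
Applying $C$ identifies the $n$-th Fourier coefficient of $CT(\cdot)x$ in $\{\phi_n\}$ with $(-1)^n\Gamma(n+1+\beta)/n!$ times $FT^n x$. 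Parseval, together with Stirling's asymptotic $\Gamma(n+1+\beta)/n!\asymp(1+n)^\beta$, then gives
\[
\|CT(\cdot)x\|^2_{L^2_\beta(0,\infty;\YY)}\asymp \sum_{n=0}^\infty(1+n)^\beta\,\|FT^n x\|^2_\YY
\]
with constants depending only on $\beta$, which is precisely the asserted equivalence of $\beta$-admissibility for $(C,(T(t))_{t\ge 0})$ and $(F,T)$.

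\textbf{Main obstacle.} The only non-mechanical ingredient is the Laguerre coefficient identification in Part 1; its validity for all $\beta>0$ rests on the Balakrishnan formula with fractional parameter $1+\beta$ and on the Laguerre generating function, both of which hold uniformly for $\beta>-1$, so the extension from Wynn's range $\beta\in(0,1)$ to $\beta>0$ is indeed purely technical, as the paper asserts. A minor further subtlety is that $C$ is only defined on $D(A)$; moving $C$ inside the Laguerre integrals is justified by working first on $D(A^2)$ (where the semigroup orbit is smooth enough) and extending by density once the admissibility norm bound is established.
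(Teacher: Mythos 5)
Your proof is correct, but Part~1 follows a genuinely different route from the paper's. For Part~2 you do exactly what the paper does: the Cayley identity $I-\bar zT=(1+\bar z)(\lambda-A)(I-A)^{-1}$, its $-(1+\beta)$ power, and the weight identity $\re\bigl(\tfrac{1-z}{1+z}\bigr)|1+z|^2=1-|z|^2$. For Part~1, however, the paper argues abstractly: it characterises $\beta$-admissibility of $F$ for $T$ (resp.\ of $C$ for $A$) as boundedness of the functional-calculus map $f\mapsto Ff(T)$ on $A^2_{\beta-1}(\DD)$ (resp.\ $g\mapsto Cg(-A)$ on $A^2_{\beta-1}(\CC_+)$), defined initially on reproducing kernels, and then transfers one to the other via an isomorphism $J_\beta:A^2_{\beta-1}(\DD)\to A^2_{\beta-1}(\CC_+)$ intertwining the two maps. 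You instead compute everything explicitly with the weighted Laguerre basis $\phi_n(t)=L^{(\beta)}_n(2t)e^{-t}$ of $L^2_\beta(0,\infty)$, identifying the $n$-th Laguerre coefficient of $CT(\cdot)x$ with $(-1)^n\Gamma(n+1+\beta)/n!\cdot FT^nx$ via the generating function, the fractional Laplace-transform formula, and the factorisation $(1+z)I-(1-z)A=(I-A)(I+zT)$; Parseval and Stirling then give the two-sided norm equivalence with explicit constants. Your computation checks out (I verified the coefficient identity), and it is arguably more self-contained than the paper's appeal to $J_\beta$ and to the reproducing-kernel characterisation of admissibility; what it costs you is one extra technical step you only partially acknowledge: in the direction ``$F$ admissible $\Rightarrow$ $C$ admissible'' you cannot invoke Parseval until you know $CT(\cdot)x\in L^2_\beta(0,\infty;\YY)$, so you should first replace $T(t)$ by $e^{-\eps t}T(t)$ (for which membership is automatic when $x\in D(A)$), obtain a bound uniform in $\eps$, and let $\eps\to0$ by monotone convergence --- the same device the paper itself uses in the proof of Theorem~\ref{thm:discreteadm} with the factor $r$. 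With that addition your argument is complete.
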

\proof
$1.$ $F$ is $\beta$-admissible for $T$ if and only if $\Lambda : A_{\beta-1}^2(\DD) \rightarrow \LL(\HH,\YY)$  defined initially on reproducing kernels by $\Lambda f = Ff(T)$ extends to a bounded linear operator. On the other hand, $C$ is $\beta$-admissible for $A$ if and only  if $\tilde \Lambda: A^2_{\beta-1}(\mathbb{C}_+) \rightarrow \LL(\HH,\YY)$ defined initially on reproducing kernels by $\tilde \Lambda (g) = Cg(-A)$ extends to a bounded linear operator. That the two conditions are equivalent follows from the fact that for any $\beta>0$ there is an  isomorphism $J_\beta : A_{\beta-1}^2(\DD) \rightarrow A^2_{\beta-1}(\mathbb{C}_+)$ for which $\Lambda = \tilde \Lambda \circ J_\beta$ holds on each reproducing kernel. \\

$2.$ Follows directly from the identities
\[
D(I-\bar zT)^{-(1+\beta)} = \frac{C R\left( \frac{1-\bar z}{1+\bar z},A\right)^{1+\beta}}{(1+\bar z)^{1+\beta}}, \qquad z \in \mathbb{D}
\]
and
\[
\text{Re}\left( \frac{1-z}{1+z} \right)|1+z|^2 = (1-|z|^2), \qquad z \in \mathbb{D}.
\]
\qed

Our main theorems concerning continuous-time systems are as follows.

\begin{theorem}\label{thm:contadm}
Let $\beta > 0$. Let $(T(t))_{t\ge 0}$ be a contraction semigroup on $\HH$ such that the adjoint of  the cogenerator  $T^*$ is $\gamma$-hypercontractive for some $\gamma >1$. Then the following are equivalent:
\begin{enumerate}
\item $C$ is $\beta$-admissible observation operator for $(T(t))_{t\ge 0}$.
\item
$$
      \sup_{\lambda\in \mathbb C_+} (\mbox{Re}\,\lambda)^{\frac{1+\beta}{2}} \|C(\lambda-A)^{-(1+\beta)}\|<\infty.
$$
\end{enumerate}
\end{theorem}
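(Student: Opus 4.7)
The plan is to reduce the continuous-time equivalence directly to the discrete-time result (Theorem \ref{thm:discreteadm}) by means of the Cayley transform, using Proposition \ref{propequiv} as the bridge. All the substantive analytic work (dilation of $\gamma$-hypercontractions to shifts on Bergman spaces, the reproducing kernel thesis for vector-valued little Hankel operators) has already been carried out, so the argument here is essentially a packaging step.

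First I would introduce the cogenerator $T := (I+A)(I-A)^{-1}$, which is a contraction on $\HH$ since $(T(t))_{t \ge 0}$ is a contraction semigroup, and set $F := C(I-A)^{-(1+\beta)} \in \LL(\HH,\YY)$. By hypothesis, $T^*$ is a $\gamma$-hypercontraction with $\gamma > 1$. Proposition \ref{propequiv}(1) then identifies the $\beta$-admissibility of $C$ for $(T(t))_{t\ge 0}$ with discrete $\beta$-admissibility of $F$ for $T$, while Proposition \ref{propequiv}(2) identifies the continuous resolvent condition (2) with the discrete resolvent growth bound
\[
\sup_{z \in \DD}(1-|z|^2)^{\frac{1+\beta}{2}} \|F(\eins-\bar z T)^{-\beta-1}\|_{\LL(\HH,\YY)} < \infty.
\]

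Second, since $T^*$ is $\gamma$-hypercontractive, Theorem \ref{thm:discreteadm} applies verbatim to the pair $(F,T)$ and asserts that discrete $\beta$-admissibility of $F$ for $T$ is equivalent to the displayed discrete resolvent bound. Chaining the two equivalences provided by Proposition \ref{propequiv} with that furnished by Theorem \ref{thm:discreteadm} delivers (1)$\Leftrightarrow$(2), completing the proof.

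The only conceptual subtlety lies in verifying that all the ingredients line up: one must check that $F = C(I-A)^{-(1+\beta)}$ is indeed a bounded operator from $\HH$ to $\YY$ (so that Theorem \ref{thm:discreteadm} is applicable), and that the hypothesis ``$T^*$ is a $\gamma$-hypercontraction for some $\gamma > 1$'' genuinely supplies the hypothesis of Theorem \ref{thm:discreteadm} in the correct direction—i.e., that the operator whose adjoint is required to be $\gamma$-hypercontractive is the cogenerator $T$ itself, not its dual. Both checks are immediate from the definitions, so no genuine obstacle arises, and the theorem follows as a one-line diagram chase of the form (1)$\Leftrightarrow_{\text{Prop.\ref{propequiv}(1)}}$ discrete admissibility of $(F,T)$ $\Leftrightarrow_{\text{Thm.\ref{thm:discreteadm}}}$ discrete resolvent bound $\Leftrightarrow_{\text{Prop.\ref{propequiv}(2)}}$ (2).
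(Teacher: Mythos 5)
Your proposal is correct and follows exactly the paper's own argument: the paper proves Theorem \ref{thm:contadm} precisely by combining Proposition \ref{propequiv} (the Cayley-transform bridge between the continuous and discrete admissibility/resolvent conditions for the pair $(F,T)$ with $F=C(I-A)^{-(1+\beta)}$) with Theorem \ref{thm:discreteadm} applied to the cogenerator. Your additional remarks on checking that $F\in\LL(\HH,\YY)$ and that the hypercontractivity hypothesis attaches to the correct operator are sensible sanity checks but introduce nothing beyond what the paper's one-line proof already relies on.
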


\begin{proof} The statement of the theorem follows from Proposition \ref{propequiv} together with Theorem \ref{thm:discreteadm}. \qed
\end{proof}

\begin{remark}
$T^*$ is $\gamma$-hypercontractive if every operator $T^*(t)$, $t\ge 0$, is $\gamma$-hypercontractive.  If $A$ generates a contraction semigroup of normal operators,
then the adjoint of the cogenerator of  $(T(t))_{t\ge 0}$ is $\gamma$-hypercontractive for each $\gamma\ge 1$, see Section \ref{sec2}.
\end{remark}

By duality we obtain the following result.

\begin{theorem}\label{thm:contadm2}
Let $\beta \in (-1,0)$. Let $(T(t))_{t\ge 0}$ be a contraction semigroup on $\HH$ such that the cogenerator $T$ is $\gamma$-hypercontractive for some $\gamma>1$. Then the following are equivalent:
\begin{enumerate}
\item $B$ is $\beta$-admissible control operator for $(T(t))_{t\ge 0}$.
\item
$$
      \sup_{\lambda\in \mathbb C_+} (\mbox{Re}\,\lambda)^{\frac{1+\beta}{2}} \|(\lambda-A)^{-(1+\beta)}B\|<\infty.
$$
\end{enumerate}
\end{theorem}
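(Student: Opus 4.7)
The proof is a duality reduction to Theorem \ref{thm:contadm}. Since $\beta\in(-1,0)\subset(-1,1)$, Remark \ref{rem:duality} identifies the $\beta$-admissibility of the control operator $B$ for $(T(t))_{t\ge 0}$ with the $(-\beta)$-admissibility of the observation operator $B^*$ for the adjoint semigroup $(T^*(t))_{t\ge 0}$. Crucially $-\beta>0$, so this observation problem falls within the scope of Theorem \ref{thm:contadm}.

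To apply that theorem to $(T^*(t))_{t\ge 0}$, I would verify its hypothesis: the cogenerator of $(T^*(t))_{t\ge 0}$ is $T^*$, whose adjoint $(T^*)^* = T$ is $\gamma$-hypercontractive by assumption. Theorem \ref{thm:contadm} then yields that $B^*$ is $(-\beta)$-admissible observation for $(T^*(t))_{t\ge 0}$ if and only if
\[
\sup_{\lambda\in\mathbb C_+}(\mathrm{Re}\,\lambda)^{\frac{1-\beta}{2}}\|B^*(\lambda-A^*)^{-(1-\beta)}\|<\infty.
\]
Taking adjoints inside the norm via $\|X^*\|=\|X\|$ and $((\lambda-A^*)^{-s})^*=(\bar\lambda-A)^{-s}$, and substituting $\mu=\bar\lambda$ (which preserves $\mathbb C_+$ and $\mathrm{Re}\,\lambda$), the above is equivalent to
\[
\sup_{\mu\in\mathbb C_+}(\mathrm{Re}\,\mu)^{\frac{1-\beta}{2}}\|(\mu-A)^{-(1-\beta)}B\|<\infty.
\]

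The main technical point left is to reconcile this dual resolvent condition (exponent $1-\beta$) with the one in the statement of the theorem (exponent $1+\beta$). Both characterise the same intrinsic $\beta$-admissibility of $B$, and I expect the passage between them to follow from the general equivalence among admissibility-type resolvent bounds at different resolvent exponents. Concretely, I would argue this equivalence either by invoking a control-operator analogue of Proposition \ref{propequiv} to pass to discrete time, where Theorem \ref{thm:discreteadm} provides the characterisation at exponent $-\beta-1$, or by tracing both resolvent conditions back to the boundedness of the same little Hankel operator between vector-valued weighted Bergman spaces, in the spirit of Proposition \ref{prop:rkt}, whose reproducing kernel thesis is insensitive to the specific resolvent exponent used to test it.
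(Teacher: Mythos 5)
Your reduction is exactly the paper's (its entire proof of this theorem is ``By duality''), and you execute the reduction correctly: Remark \ref{rem:duality} converts statement (1) into $(-\beta)$-admissibility of the observation operator $B^*$ for $(T^*(t))_{t\ge 0}$ with $-\beta>0$; the cogenerator of the adjoint semigroup is $T^*$, so the hypothesis of Theorem \ref{thm:contadm} (that the adjoint of the cogenerator, namely $T$, is $\gamma$-hypercontractive) is verified; and taking adjoints and conjugating $\lambda$ in the resulting resolvent bound is legitimate. You also correctly compute what this actually yields, namely
\[
\sup_{\lambda\in\mathbb C_+}(\mathrm{Re}\,\lambda)^{\frac{1-\beta}{2}}\,\bigl\|(\lambda-A)^{-(1-\beta)}B\bigr\|<\infty .
\]

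The gap is your final ``reconciliation'' step. There is no general principle that admissibility-type resolvent bounds at different resolvent exponents are equivalent: for a fixed weight, each member of the scaling-consistent family $(\mathrm{Re}\,\lambda)^{\frac{2s-1+\beta}{2}}\|(\lambda-A)^{-s}B\|<\infty$ is a \emph{necessary} condition for $\beta$-admissibility, and the claim that any particular one is also \emph{sufficient} is precisely a Weiss-type conjecture, so appealing to their mutual equivalence is circular. Worse, the condition printed in the theorem is not even the $s=1+\beta$ member of that family: the scaling-consistent power of $\mathrm{Re}\,\lambda$ attached to the resolvent power $1+\beta$ for a $\beta$-admissible control operator is $(1+3\beta)/2$, not $(1+\beta)/2$. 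Your first proposed bridge (the discrete control theorem) is itself obtained ``by duality'' from Theorem \ref{thm:discreteadm} and exhibits the identical exponent discrepancy, so it cannot close the loop; your second (a reproducing-kernel-thesis argument) is not exponent-insensitive, because changing the resolvent power changes which family of kernels one tests the Hankel operator on. What the duality argument genuinely proves is the theorem with $1+\beta$ replaced by $1-\beta$ in both the power of $\mathrm{Re}\,\lambda$ and the power of the resolvent; the paper supplies no argument for the form with $1+\beta$. You should therefore either state and prove the dual theorem in the $1-\beta$ form, or supply an actual proof that the two resolvent conditions coincide for this class of semigroups --- which your proposal does not contain.
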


Theorems \ref{thm:contadm} and \ref{thm:contadm2} give positive results  for $\beta>0$ and adjoints of $\gamma$-hypercontractions in the case of observation operators, and for $\beta<0$ and $\gamma$-hypercontractions
in the case of control operators. The remaining possibilities for $\beta \in (-1,0) \cup (0,1)$ can be shown not to hold by means of various counterexamples.
For $\beta \in (-1,0)$ the counterexample for normal semigroups given in \cite{wynn09} shows that there is no positive result for observation operators in either the $\gamma$-hypercontractive or
adjoint $\gamma$-hypercontract\-ive case. For $\beta \in (0,1)$, there is a counterexample in  \cite{wynn09} based on the unilateral shift, which is 2-hypercontractive, see Figure~\ref{fig3}.

By Remark \ref{rem:duality}, these provide appropriate counterexamples for control operators as well.

 \begin{figure}[h]
 \centering
 \begin{pgfpicture}{0cm}{2cm}{10cm}{5.5cm}
   
        \pgfsetlinewidth{1pt}
        \pgfxyline(12.7,2)(12.7,5)
        \pgfxyline(-2.7,5)(12.7,5)
         \pgfxyline(6,2)(6,5)
          \pgfxyline(-2.7,2)(12.7,2)
              \pgfxyline(-2.7,3)(12.7,3)
        \pgfxyline(-0.7,2)(-0.7,5)
          \pgfxyline(-2.7,2)(-2.7,5)
      \pgfsetlinewidth{0.8pt}
            \pgfxyline(-2.7,4)(12.7,4)
    \pgfputat{\pgfxy(2.5,4.3)}{\pgfbox[center,base]{$T$ $\gamma$-hypercontr.~for some $\gamma>1$}}
     \pgfputat{\pgfxy(9.5,4.3)}{\pgfbox[center,base]{$T^*$ $\gamma$-hypercontr.~for some $\gamma>1$}}
      \pgfputat{\pgfxy(-1.7,3.4)}{\pgfbox[center,base]{$\beta\in(-1,0)$}}
     \pgfputat{\pgfxy(9.6,2.3)}{\pgfbox[center,base]{Conjecture holds by Theorem \ref{thm:contadm} }}
       \pgfputat{\pgfxy(9.6,3.4)}{\pgfbox[center,base]{Counterexample \cite{wynn09}}}
     \pgfputat{\pgfxy(2.7,2.3)}{\pgfbox[center,base]{Counterexample \cite{wynn09}}}
      \pgfputat{\pgfxy(2.7,3.4)}{\pgfbox[center,base]{Counterexample \cite{wynn09}}}
     \pgfputat{\pgfxy(-1.7,2.3)}{\pgfbox[center,base]{$\beta\in(0,1)$}}
    \end{pgfpicture}
    \caption{Weighted Weiss conjecture}
    \label{fig3}
\end{figure}

%
%
%
%
%
%
%
%
%
%
%
%
%

\end{document}